\newcommand{\N}{\ensuremath{\mathbb N}}
\newcommand{\Z}{\ensuremath{\mathbb Z}}
\newcommand{\R}{\ensuremath{\mathbb R}}
\newcommand{\Q}{\ensuremath{\mathbb Q}}
\newcommand{\C}{\ensuremath{\mathbb C}}
\theoremstyle{plain}		
	\newtheorem{theorem}{Theorem}[section]
	\newtheorem{cor}[theorem]{Corollary}
     \newtheorem{lemma}[theorem]{Lemma}
\theoremstyle{remark}		
	\newtheorem*{remark}{Remark}
\numberwithin{equation}{section}
\begin{document}
\title{The Functional Equation and Beyond Endoscopy}
\author{P. Edward Herman} 
\address{University of Chicago}
\email{peherman@math.uchicago.edu}

\begin{abstract}
  In his paper ``{\it Beyond  Endoscopy,}" Langlands tries to understand functoriality via poles of L-functions.  The following paper further investigates the analytic continuation of a L-function associated to a $GL_2$ automorphic form through the trace formula. Though the usual way to obtain the analytic continuation of an L-function is through its functional equation, this paper shows that by simply assuming the trace formula, the functional equation of the L-function may be recovered. This paper is a step towards understanding the analytic continuation of the L-function at the same time as capturing information about functoriality. 
 
 From an analytic number theory perspective, obtaining the functional equation from the trace formula implies that Voronoi summation should in general be also a consequence of the trace formula.
 
   \end{abstract}

\dedicatory{Dedicated to the memory of  Jonathan Rogawski}

\maketitle

\section{ Beyond Endoscopy}

        Let $\mathbb{A_Q}$ be the ring of adeles of $\Q,$ and $\pi$ be
    an automorphic cuspidal representation of
    $GL_2(\mathbb{A_Q}).$ We define $m(\pi,\rho)$ to be the order of the
    pole at $s=1$ of $L(s,\pi,\rho),$ where $\rho$ is a
    representation of the dual group $GL_2(\C).$

        Langlands proposes the study of \begin{equation} \label{p_sum} \lim_{X \to \infty} \sum_\pi \frac{1}{X}
        tr(\pi)(f)\sum_{p\leq X} \log (p) a(p,\pi,\rho).\end{equation} Here $f$
        is a nice test function on $GL_2(\mathbb{A_Q}),$ and
        $tr(\pi)(f)$ is the trace of the operator defined by $f$
        on $\pi.$ $a(p,\pi,\rho)$ is the $p-$th Dirichlet
        coefficient of $L(s,\pi,\rho).$ The quantity $$\lim_{X \rightarrow
        \infty}\frac{1}{X}\sum_{p\leq X} \log (p) a(p,\pi,\rho),$$
        is equal to $m(\pi,\rho).$

            Therefore, summing over the range of representations
            $\pi$  will project only on to the ones which have nontrivial multiplicity. The tool used to study this sum
            over the spectrum of forms $\pi$ is the trace formula.
            Ultimately, one gets from use of the trace formula a sum over
            primes and conjugacy classes, and hopes by analytic
            number theory techniques to take the limit. One hopes
            that after getting the limit, one can decipher and
            construct the L-functions having non-trivial
            multiplicity of the pole at $s=1.$ 
            Sarnak addresses (\ref{p_sum}) in \cite{S} for $\rho=std$
            the standard representation. He points out that such a
            computation can be done, but the tools used for the
            study of sums of primes is limited, and this problem
            is perhaps more tractable if rather studied over the
            sum of integers.

 Sarnak's idea then is to evaluate \begin{equation} \label{nsum} \lim_{X
\to \infty} \sum_\pi \frac{1}{X}
        tr(\pi)(f)\sum_{n\leq X}  a(n,\pi,\rho).\end{equation}
        This should ``detect," rather than the multiplicities of the
        poles, the residue of the poles of the associated
        L-functions. As well, instead of using the Arthur Selberg trace formula, he uses the Petersson-Kuznetsov trace formula, which is a special case of the relative trace formula (\cite{KL}).  One advantage to this trace formula is that the spectrum contains only generic representations, so we avoid the task of excising the trivial representation as in \cite{FLN}. As well, the geometric side of the relative trace formula has a nice ``streamlined" appearance as a sum of Kloosterman sums. This is in comparison to the Arthur-Selberg trace formula which has orbital integrals associated to different conjugacy classes for which analysis of each class could be different.
        
         The disadvantage to the relative trace formula is that each automorphic representation $\pi$ on the spectral side of the trace formula is weighted by a factor $L(1,\pi,ad)^{-1},$ which is the adjoint representation of $\pi$ evaluated at $s=1.$ This can perhaps make matching two different trace formulas more difficult. Another disadvantage of using the relative trace formula is that the Arthur-Selberg trace formula is in much better shape to generalize to other groups. Namely, one now has full use of the stable trace formula due to the proof of the Fundamental Lemma by Ngo, \cite{N}. With the stable trace formula, one can compare stable conjugacy classes for different groups (specifially endoscopic groups), from which one can then compare automorphic representations for the respective groups. 
        
        However in our case of studying $GL(2),$ the disadvantages seem minimal, and in fact the crucial exponential sums one encounters in either trace formula are the same. Sarnak in \cite{S} made some points on the essential differences of the geometric sides of the two trace formulas. Also, in the case of $GL(2),$ the stable trace formula is the same as the Arthur-Selberg trace formula, so one should not expect an advantage of one trace formula over another. 
        
        \subsection{Sarnak's analysis for $\rho=std$}
        
             The obvious first example to test Langlands's beyond endoscopy idea is for $\rho$ the standard representation. In this case we do not expect the L-functions to have any poles except for the continuous spectrum, but in this case there are not any poles as the spectrum is not spectrally isolated. So we expect in the case of $\rho=std$ that \begin{equation} \label{eq:nsum00} \lim_{X
\to \infty} \sum_\pi \frac{1}{X}
        tr(\pi)(f)\sum_{n\leq X}  a(n,\pi,std)=0.\end{equation} 
        
        Rather than use the adelic language, Sarnak uses the classic
        Petersson-Kuznetsov trace formula. To go from \eqref{eq:nsum00} to a classic approach, one can follow the great expository of Rogawski \cite{Ro} or the book of Knightly and Li, \cite{KL1}. Then for an automorphic form $f$ with normalized Fourier coefficients $a_n(f)$ associated to a representation $\pi,$ Sarnak \cite{S} showed, up to some weight factors needed in the trace formula,  \begin{equation} \label{eq:sarrk} \sum_{n \leq X} \sum_f a_n(f)g(n/X)=O(X^{-A})\end{equation} for any $A>0.$ Here $X$ is a large fixed parameter and $g\in C_0^{\infty}(\R^{+})$ is used for ``smoothing" the $n$-sum. Why is this smoothing needed? It is certainly not essential, but when one goes to the geometric side of the trace formula to get the bound \eqref{eq:sarrk}, one requires freedom to apply analytic manipulations (interchanging sums, Fourier transforms, etc..). With the smoothing function $g,$ these problems are removed and one can focus on the central issue of the arithmetic, which is the true difficulty in these problems.  One can recover the left hand side of \eqref{eq:nsum00} by applying techniques in \cite{I}. For completeness, we will reproduce Sarnak's argument in the appendix.

        \subsection{Results of the paper}
        
       Clearly \eqref{eq:sarrk} is a stronger result than \eqref{eq:nsum00}, and up to using Hecke operators, is equivalent to $L(s,f)=\sum_{n=1}^{\infty}\frac{a_n(f)}{n^s}$ having analytic continuation to the complex plane. We see the analytic continuation of the left hand side of \eqref{eq:sarrk} by Mellin inversion. By applying Mellin inversion to \eqref{eq:sarrk} we get \begin{equation}\label{eq:ini}
\frac{1}{X}\sum_f \sum_{n \leq X}g(n/X)a_n(f)=
\frac{1}{2\pi i} \int_{\sigma -i\infty}^{\sigma+i\infty} G(s)[\sum_f L(s,f)]X^s ds, 
\end{equation}
where $G(s)=\int_0^\infty g(x)x^{s-1}dx$ is the Mellin transform with $\sigma >2$ to ensure the convergence of the integral. Now using the right hand side of \eqref{eq:sarrk} we know that the contour in \eqref{eq:ini} can be shifted (using decay properties of $G(s)$) to $\sigma=-A, A >0.$ 
So in Sarnak's application of the trace formula to get \eqref{eq:sarrk} we indirectly applied a functional equation of the L-function for each automorphic form $f$ in our spectral sum. Can we actually see directly the functional equation via manipulations on the geometric side of the trace formula?  In other words, can we show directly via the trace formula that $$ \frac{1}{2\pi i} \int_{\sigma -i\infty}^{\sigma+i\infty} G(s)[\sum_f L(s,f)]X^s ds=  \frac{1}{2\pi i} \int_{\sigma -i\infty}^{\sigma+i\infty} G(s)[\sum_f\frac{i^k\gamma(f,1-s)L(f,1-s)}{\gamma(f,s)}]X^sds?$$ We will prove this equality and get the functional equation for a fixed automorphic form $f$ in this note. Of course there are much more easy ways to get the functional equation for a $GL(2)$ automorphic form, but in consideration of Langlands's beyond endoscopy idea, a trace formula approach seems the most systematic way to get analytic continuation for all L-functions $L(s,\pi,\rho)$ associated to a dual group representation $\rho$ of an automorphic representation $\pi$ of a group $G.$ This is certainly a more difficult question then investigating whether the L-function has a pole at $s=1$ or not.
\subsubsection{Voronoi Summation}
If one can always recover the functional equation from the trace formula, then from the analytic number theory perspective, the Voronoi summation should be implied also from the trace formula. 
 For example in the papers \cite{KMV},\cite{KMV1}, an application of a trace formula and a Voronoi summation are used to get results on subconvexity.  Could one avoid Voronoi summation and just  apply the trace formula? In the paper \cite{H3}, we do just that to get subconvexity for the Rankin-Selberg L-function in both levels by applying a double trace formula instead of a Voronoi summation and a single trace formula.

\subsection{Key steps in proof}

As for the proof of the main theorem, one sees the role of the sum over the Kloosterman sums on the geometric side of the trace formula interacts with the averaging coming from the Dirichlet series for the standard L-function.  

To see the functional equation of a $GL_2$ L-function, the Dirichlet series sum exchanges roles with the sum of Kloosterman sums. There are two important steps in this switching of roles of parametrization. One is elementary reciprocity, $$\frac{\overline{A}}{B}+ \frac{\overline{B}}{A}\equiv \frac{1}{AB}(1)$$ which allows one to invert the modulus of exponential sums. This simple reciprocity seems to come up several times in these beyond endoscopy calculations (see e.g. \cite{H1}, \cite{H2}). The second important tool is the integral representation $$\int_0^\infty \exp(-\alpha x) J_{\nu}(2\beta\sqrt{x})J_{\nu}(2\gamma\sqrt{x})dx=\frac{1}{\alpha}I_{\nu}(\frac{2\beta \gamma}{\alpha})\exp(\frac{-(\beta^2 + \gamma^2)}{\alpha})dx.$$ Reminded that Bessel functions are the archimedean version of Kloosterman sums, this representation implies that a Fourier transform of a product of Kloosterman sums is another Kloosterman sum times an exponential sum. It would be nice to see how these two steps are generalized for higher rank or for a relative trace formula for other groups.


{\bf Acknowledgements.}  
I want to dedicate this paper to my advisor, Jon Rogawski. I could not have asked for a better advisor than Jon as a graduate student at UCLA. His unfaltering patience and calm resolve balanced my personality, which was the opposite of patient in those days. When I would fail to understand an aspect of automorphic forms or the trace formula, Jon would dismiss my frustration and clarify the misunderstanding in a way that only Jon couldÑ with sympathetic composure and the knowledge of a veteran in the field.  While he was always collected when he addressed the challenges that I faced,  he was a passionate person and was ardent when he spoke about math. During times of stagnation, I would go to Jon to reinvigorate me. After talking to Jon, I always felt more inspired and confident.

From winter to the early part of summer of 2011, I was at the American Institute of Mathematics in Palo Alto and would visit Jon in Los Angeles every few months. I remember fondly going to coffee shops or to his home to discuss my new ideas as he shared his own. It was in one of these gatherings that he suggested how to isolate a single Hecke eigenform (found in Section \ref{rogawski}). I want to point this out because even today, Jon continues to inspire me. I am honored to have been one of his students and am also saddened that I was his last.

\section{Preliminaries}

We recall the functional equation for a cusp form. Let $D$ be a squarefree integer, $\chi$ be a primitive Dirichlet character modulo $D,$ and $k\geq 2, k\in 2\Z$ . Let $f \in S_k(D,\chi),$ where $S_k(D,\chi)$ is the space of holomorphic modular forms of weight $k$ and level $D$ with nebentypus $\chi,$ see \cite{IK}. In this case the space $S_k(D,\chi)$ can be spanned by an orthonormal basis of primitive newforms which we label $B_k(D,\chi).$ We note the Fourier coefficients $c_n(f)n^{\frac{k-1}{2}}$ of a form $f \in B_k(D,\chi)$ satisfy $$c_{n}
(f)c_{l}(f)=\sum_{r|(n,l)}\chi(r)c_{\frac{nl}{r^2}}(f)$$ for $(nl,D)=1,$ and also $|c_D(f)|=1.$ 

Let $L(f,s)=\sum_{n=1}^\infty \frac{c_n(f)}{n^s},$ and define  $\Lambda(f,s)=\gamma(f,s)L(f,s),$ where $$\gamma(f,s)=(\frac{\sqrt{D}}{2\pi})^{s} \Gamma(\frac{s+\frac{k-1}{2}}{2}) \Gamma(\frac{s+\frac{k+1}{2}}{2}).$$ The functional equation then says $\Lambda(f,s)=i^k\Lambda(f,1-s).$

The trace formula we use is Petersson's formula which is a variant of the relative trace formula \cite{KL}. 
This formula requires a normalization of the Fourier coefficients.  For $c_n(f)$ above, define $$a_n(f):=
   \frac{ \sqrt{\pi^{-k} \Gamma(k)}} {2^{k-1}}c_n(f).$$ Petersson's formula states 
    \begin{equation}\label{eq:pet} \sum_{f \in B_k(D,\chi)} a_n(f)\overline{a_l(f)}=\delta_{n,l}+ 2\pi i^{-k} \sum_{c\equiv 0(D)}^\infty \frac{S_{\chi}(n,l,c)}{c} J_{k-1}(\frac{4\pi \sqrt{nl}}{c}).\end{equation}
Here $$S_{\chi}(a,b,c)=\sum_{x(c)^{*}}\overline{\chi(x)}(\frac{ax +b\overline{x}}{c}),$$ where $x \overline{x} \equiv 1(c)$ and $e(x):=\exp(2\pi i x).$  $J_t(x)$ is the $J$-Bessel function with index $t$. 
 
 To relate the functional equation to the geometric side of the trace formula, we need an equivalent version of the functional equation for a form $f \in B_k(D,\chi)$ which is called Voronoi summation. The Voronoi summation needed is proved in the appendix of \cite{KMV}, and states
 \begin{theorem}\label{mic}
 Let $g \in C_0^\infty(\R^{+})$ and $f \in B_k(D,\chi),$ then for integers $a,c$ such that $(aD,c)=1,$ 
 \begin{equation}\label{eq:vor}
\sum_{n\geq 1} a_n(f)e(\frac{an}{c})g(n)= \frac{2\pi i^k \eta(f)\chi(-c)}{c\sqrt{D}} \sum_{n\geq 1} a_n(f_D)e(\frac{-n\overline{aD}}{c}) \int_0^\infty g(x)J_{k-1}(\frac{4\pi\sqrt{nx}}{\sqrt{D}c})dx,
\end{equation}
where $a\overline{a}\equiv 1 (c).$ Here $\eta(f)=\frac{\tau(\chi)}{a_D(f)\sqrt{D}},$ with $\tau(\chi)$ the Gauss sum associated to $\chi,$ and $$a_n(f_D)=\left\{ \begin{array}{ll}
        \overline{\chi(n)}a_n(f) & \text{if } (n,D)=1; \medskip \\
      \overline{a_n(f)} & \text{if } n|D^{\infty}.\end{array} \right.$$ \end{theorem}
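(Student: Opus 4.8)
\medskip
\noindent\emph{Proof strategy.} The statement is the one proved in the appendix of \cite{KMV}; here is the route I would follow to reprove it. The plan is to view \eqref{eq:vor} as the Mellin-transformed shadow of a functional equation for the additively twisted Dirichlet series
\[
L_f(s,a/c):=\sum_{n\geq 1}\frac{a_n(f)\,e(an/c)}{n^{s}}\qquad(\Re s\gg 1),
\]
namely
\begin{equation}\label{eq:twFE}
\Lambda_f(s,a/c)=i^{k}\,\eta(f)\,\chi(-c)\,\Lambda_{f_D}\big(1-s,\,-\overline{aD}/c\big),\qquad \Lambda_f(s,a/c):=\Big(\tfrac{c\sqrt D}{2\pi}\Big)^{\!s}\Gamma\big(\tfrac{k-1}{2}+s\big)L_f(s,a/c).
\end{equation}
That \eqref{eq:twFE} implies \eqref{eq:vor} is a mechanical computation: on the left side of \eqref{eq:vor} insert Mellin inversion $g(x)=\frac{1}{2\pi i}\int_{(\sigma)}\widetilde g(s)\,x^{-s}\,ds$, with $\widetilde g(s)=\int_0^\infty g(x)x^{s-1}dx$ entire and of super-polynomial decay on vertical lines since $g\in C_0^\infty(\R^+)$, to obtain $\frac{1}{2\pi i}\int_{(\sigma)}\widetilde g(s)L_f(s,a/c)\,ds$ for $1<\sigma<(k+1)/2$; on the right side of \eqref{eq:vor} use the classical Mellin transform of the Bessel function,
\[
\int_0^\infty x^{-s}J_{k-1}\!\Big(\tfrac{4\pi\sqrt{nx}}{\sqrt D\,c}\Big)\,dx=\Big(\tfrac{\sqrt D\,c}{2\pi\sqrt n}\Big)^{\!2-2s}\frac{\Gamma\big(\tfrac{k+1}{2}-s\big)}{\Gamma\big(\tfrac{k-1}{2}+s\big)},
\]
valid in a vertical strip and then by continuation, which turns the right side into $\frac{2\pi i^{k}\eta(f)\chi(-c)}{c\sqrt D}\cdot\frac{1}{2\pi i}\int_{(\sigma')}\widetilde g(s)\big(\tfrac{\sqrt D c}{2\pi}\big)^{2-2s}\frac{\Gamma((k+1)/2-s)}{\Gamma((k-1)/2+s)}L_{f_D}(1-s,-\overline{aD}/c)\,ds$ for $\sigma'<0$. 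Shifting the latter contour back to $\Re s=\sigma$ (no pole is crossed, since $L_{f_D}$ is entire and the only Gamma-poles lie at $\Re s\geq(k+1)/2$) and comparing integrands by \eqref{eq:twFE} gives \eqref{eq:vor}; the interchanges of summation and integration are justified by the rapid decay of $\widetilde g$ against the polynomial growth of the Gamma-ratio together with the absolute convergence of $L_{f_D}(1-s,\cdot)$ for $\Re s<0$.

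It remains to prove \eqref{eq:twFE}. Writing, up to the elementary constant relating $a_n(f)$ and $c_n(f)$,
\[
\Lambda_f(s,a/c)=(\mathrm{const})\int_0^\infty f\!\Big(\frac ac+\frac{iy}{c}\Big)\,y^{\,s+\frac{k-1}{2}}\,\frac{dy}{y},
\]
this is an entire function of $s$, because $f$ is a cusp form and hence $f(a/c+iy/c)$ decays rapidly both as $y\to\infty$ and, by modularity at the cusp $a/c$, as $y\to 0$. I would then cut the integral at a point $y=y_0$, leave $\int_{y_0}^\infty$ unchanged, and in $\int_0^{y_0}$ perform the change of variables associated to the cusp $a/c$. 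Since $(c,D)=1$, the cusp $a/c$ is $\Gamma_0(D)$-equivalent to the cusp $0$, so there is an explicit matrix identity expressing $f(a/c+iy/c)$ for small $y$ through $(f|_k W_D)$ evaluated near the cusp $-\overline{aD}/c$, with $W_D=\tfrac{1}{\sqrt D}\left(\begin{smallmatrix}0&-1\\ D&0\end{smallmatrix}\right)$; the inverse $\overline{aD}\pmod c$ produced here is precisely the ``elementary reciprocity'' flagged in the introduction. Invoking the Atkin--Lehner--Fricke pseudo-eigenvalue relation $f|_k W_D=\eta(f)\,f_D$ for the squarefree-level newform $f$ --- with $\eta(f)=\tau(\chi)/(a_D(f)\sqrt D)$, which is where $|a_D(f)|=1$ enters, and with $f_D$ the form whose coefficients are the $a_n(f_D)$ of the statement (the twist at primes dividing $D$ accounting for the case $n\mid D^\infty$) --- transforms $\int_0^{y_0}$ into the matching tail of $\Lambda_{f_D}(1-s,-\overline{aD}/c)$. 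The boundary terms at $y_0$ cancel because both $f$ and $f_D$ vanish at the cusps, and recombining the two tails, with the constants tracked ($i^k$ from the weight together with $W_D$, $\tau(\chi)$ and $a_D(f)$ from $\eta(f)$, and $\chi(-c)$ from the nebentypus acting through the substitution), yields \eqref{eq:twFE}.

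The analytic ingredients above --- the Mellin inversions, the interchanges, and the contour shifts --- are routine; the real work is the bookkeeping in the previous paragraph: obtaining the matrix decomposition at the cusp $a/c$ in closed form, chasing the nebentypus $\chi$ through it so that the twist factor is exactly $\chi(-c)$, and pinning down the Atkin--Lehner pseudo-eigenvalue for a squarefree-level newform with nontrivial nebentypus so that the constant is exactly $\tau(\chi)/(a_D(f)\sqrt D)$ and the dual coefficients are exactly the $a_n(f_D)$ of the statement, including the components at primes dividing $D$. A second, essentially equivalent route expands $e(an/c)$ into Dirichlet characters modulo $c$ and assembles \eqref{eq:vor} from the functional equations of the twists $f\otimes\psi$ of the newform $f$; this merely trades the matrix-and-cusp bookkeeping for a Gauss-sum bookkeeping of comparable length, so I would still prefer the direct argument above.
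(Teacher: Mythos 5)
The paper does not actually prove Theorem \ref{mic}: it is taken verbatim from the appendix of \cite{KMV} and cited, not rederived. The only overlap with a ``proof'' in the present paper is the one-sentence remark after the statement indicating that, for $a=c=1$, Voronoi summation is equivalent to the functional equation $\Lambda(f,s)=i^k\Lambda(f,1-s)$ via Mellin inversion, the Mellin--Barnes representation of $J_{k-1}$, and the duplication formula. Your proposal reconstructs essentially the argument in \cite{KMV}: reduce \eqref{eq:vor} to the additively twisted functional equation by Mellin inversion and the Mellin transform $\int_0^\infty x^{-s}J_{k-1}(4\pi\sqrt{nx}/(\sqrt D c))\,dx$, then prove the twisted functional equation by representing $\Lambda_f(s,a/c)$ as an integral of $f$ up the geodesic at the cusp $a/c$, splitting at a point $y_0$, using the $\Gamma_0(D)$-equivalence of $a/c$ with the cusp $0$ (valid since $(c,D)=1$) and the Fricke/Atkin--Lehner involution $W_D$ to turn the small-$y$ piece into the tail of $\Lambda_{f_D}(1-s,-\overline{aD}/c)$. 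That is the same route. Your reduction step is correctly set up (the Mellin transform of $J_{k-1}$ and the contour-shift justification are right), and you have correctly identified where each piece of the constant comes from: $i^k$ from weight and $W_D$, $\tau(\chi)/(a_D(f)\sqrt D)$ from the Atkin--Lehner pseudo-eigenvalue (using $|a_D(f)|=1$ for a squarefree-level newform), $\chi(-c)$ from chasing the nebentypus through the matrix conjugation, and the inverse $\overline{aD}$ from the mod-$c$ reciprocity built into the matrix identity.

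Two minor points worth flagging. First, you leave the nebentypus and cusp bookkeeping unverified and say so explicitly; this is where the work is, and it is where sign and conjugation errors typically creep in (for instance, whether the dual series involves $a_n(f_D)$ or $\overline{a_n(f_D)}$, and whether the twist is by $\chi(-c)$ or $\overline{\chi}(-c)$), so a complete write-up would need the explicit scaling matrix realizing the $W_D$-conjugation at $a/c$ as in KMV. Second, note that the $\gamma$-factor as printed in this paper, $(\sqrt D/2\pi)^s\Gamma(\tfrac{s+(k-1)/2}{2})\Gamma(\tfrac{s+(k+1)/2}{2})$, does not quite agree after the duplication formula with the single-Gamma completion $(\sqrt D c/2\pi)^s\Gamma(s+\tfrac{k-1}{2})$ you use (the powers of $2$ drift by $2^{-s}$); your normalization is the standard one, and the mismatch appears to be a minor typo in the paper's definition of $\gamma(f,s)$ rather than an error in your argument.
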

 
 In our case, we only take $a=c=1.$ If so, the functional equation of the L-function $L(f,s)$ is equivalent to the Voronoi summation by using Mellin inversion on the left hand side of \eqref{eq:vor}, then applying the functional equation to $L(f,s)$ and using the integral representation
 $$J_{k-1}(x)=\frac{1}{4\pi i} \int_{(\sigma)}(\frac{x}{2})^{-s} \frac{\Gamma(\frac{s+\frac{k-1}{2}}{2})}{\Gamma(\frac{1-\frac{s}{2}+\frac{k-1}{2}}{2})}ds$$ for $0 < \sigma < 1,$ along with the duplication formula for the Gamma function.
 
\section{Main theorem}
 The main theorem of the paper is,
\begin{theorem}\label{mtheo}
Let $g \in C_0^\infty(\R^{+})$ with  $|x^j g^{(j)}(x)| \ll (1+|\log x|)$ and $(l,D)=1$ any positive integer, then only assuming Petersson's formula above, one gets
\begin{equation}\label{eq:mmm}\sum_{f \in B_k(D,\chi)}\overline{a_l(f)} \sum_{n\geq 1} a_n(f)g(n)= \sum_{f \in B_k(D,\chi)}\overline{a_{l}(f)}\left[ \frac{2\pi i^k\eta(f)}{\sqrt{D} }\sum_{n} a_n(f_D) \int_0^\infty g(x)J_{k-1}(\frac{4\pi\sqrt{nx}}{\sqrt{D}})dx\right].\end{equation}
\end{theorem}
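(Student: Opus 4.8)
The plan is to start from the left-hand side of \eqref{eq:mmm} and feed the $n$-sum through Petersson's formula \eqref{eq:pet}, then recognize the resulting arithmetic-geometric object as the right-hand side of \eqref{eq:mmm}, again via \eqref{eq:pet}, run in reverse. Concretely: first apply \eqref{eq:pet} to the pair $(n,l)$, so that
\begin{equation}\label{eq:plan1}
\sum_{f} \overline{a_l(f)} \sum_{n\geq 1} a_n(f) g(n) = \sum_{n\geq 1} g(n)\,\delta_{n,l} + 2\pi i^{-k} \sum_{n\geq 1} g(n) \sum_{c\equiv 0(D)} \frac{S_\chi(n,l,c)}{c} J_{k-1}\!\Bigl(\frac{4\pi\sqrt{nl}}{c}\Bigr).
\end{equation}
The diagonal contributes $g(l)$. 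For the off-diagonal, I would interchange the $n$- and $c$-sums (legitimate because $g\in C_0^\infty(\R^+)$ and the hypothesis $|x^j g^{(j)}(x)|\ll 1+|\log x|$ controls tails after the Bessel transform), open the Kloosterman sum $S_\chi(n,l,c)$ as a sum over $x\bmod c$, and Poisson-summate (or Mellin-analyze) the resulting $\sum_n g(n) e(nx/c) J_{k-1}(4\pi\sqrt{nl}/c)$ in $n$. This is where the two advertised tools enter.

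The key computational heart is the step switching the roles of the Dirichlet parametrization and the Kloosterman modulus. Writing $c = D c'$ and using the reciprocity $\overline{x}/c + \overline{c}/x \equiv 1/(cx)$ to invert the modulus inside $S_\chi(n,l,c)$, one trades the sum over $x \bmod c$ for a sum over a new modulus; simultaneously, applying the Bessel integral representation quoted in the preliminaries,
$$J_{k-1}(y) = \frac{1}{4\pi i}\int_{(\sigma)} (y/2)^{-s}\,\frac{\Gamma\bigl(\tfrac{s+\frac{k-1}{2}}{2}\bigr)}{\Gamma\bigl(\tfrac{1-\frac{s}{2}+\frac{k-1}{2}}{2}\bigr)}\,ds,$$
converts $\sum_n g(n) e(nx/c) J_{k-1}(4\pi\sqrt{nl}/c)$ into a contour integral of $L$-series-type Mellin transforms whose Dirichlet coefficients reassemble, after the reciprocity substitution, into $\sum_m a_m(f_D) e(-m\overline{\cdots}) \cdot(\text{Bessel})$ — i.e. exactly the shape of the right-hand Kloosterman sum $S_\chi$ with arguments $(\ast, \ast, c/D)$ attached to a $J_{k-1}(4\pi\sqrt{mx}/(\sqrt{D}c'))$. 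The Weber–Schafheitlin-type identity
$$\int_0^\infty e^{-\alpha x} J_\nu(2\beta\sqrt{x}) J_\nu(2\gamma\sqrt{x})\,dx = \frac{1}{\alpha} I_\nu\!\Bigl(\frac{2\beta\gamma}{\alpha}\Bigr) e^{-(\beta^2+\gamma^2)/\alpha}$$
is what guarantees that a Fourier/Mellin transform of a product of two Bessel pieces is again a single Bessel piece, so the process closes rather than proliferating. One then reads the output as $\sum_f \overline{a_l(f)} \cdot [\text{Petersson off-diagonal with } (m, l, c/D)]$ after undoing \eqref{eq:pet} in the $f$-variable, and absorbs the arithmetic constants $\tau(\chi)/(a_D(f)\sqrt D) = \eta(f)$ and the twist $a_n(f_D)$ into the normalization demanded by Theorem~\ref{mic}.

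The main obstacle I anticipate is \emph{bookkeeping of the arithmetic constants and the twisted coefficients} $a_n(f_D)$: tracking how the nebentypus $\chi$, the Gauss sum $\tau(\chi)$, the factor $|a_D(f)| = 1$, and the splitting of $n$ into parts coprime to $D$ versus dividing $D^\infty$ propagate through the reciprocity step and through opening/reassembling the Kloosterman sum at modulus $c$ versus $c/D$. The condition $(l,D) = 1$ is what keeps this manageable — it means $l$ never interacts with the ramified modulus — but matching the precise constant $2\pi i^k \eta(f)/\sqrt D$ and verifying that the $D$-part of the transformed coefficient is genuinely $\overline{a_n(f)}$ rather than something merely proportional to it will require care. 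A secondary (soft) obstacle is justifying the interchanges of summation and the contour shift: here the hypothesis $|x^j g^{(j)}(x)| \ll 1 + |\log x|$ is exactly calibrated to give just enough decay in the Mellin variable (the $\log$ loss corresponds to a double pole one can tolerate) while allowing the compact support of $g$ to kill convergence issues at the origin. I would handle this with standard stationary-phase/integration-by-parts bounds on the Bessel transform of $g$ and defer it to a lemma.
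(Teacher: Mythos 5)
Your skeleton is the paper's: apply Petersson to the $n$-sum, Poisson-summate inside the Kloosterman sum to pick up a new dual variable, invert the modulus by reciprocity, apply the Gradshteyn--Ryzhik identity (6.615), and then reassemble into Petersson's off-diagonal at a new level and undo the trace formula in $f$. The two ``key tools'' you name are exactly the paper's, and you correctly locate the bookkeeping hazards ($\eta(f)$, the nebentypus, and the $D$-part of $a_n(f_D)$). But two substantive ideas are missing. First, after reciprocity and GR~6.615 one does not immediately ``read off'' a Kloosterman sum: what one actually has is a sum over the \emph{old} modulus $c$ of a continuous integral transform carrying a phase $e(-cy/m)$. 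The mechanism that converts this back into a discrete arithmetic object is a \emph{second} Poisson summation --- the paper recognizes the $c'$-sum in \eqref{eq:popo} retroactively as a Poisson summation and justifies it via Lemma~\ref{mmm}. That is the pivot at which the Bessel integral actually becomes $J_{k-1}(4\pi\sqrt{lDj}/(Dm))\int g\,J_{k-1}(4\pi\sqrt{tj}/\sqrt{D})\,dt$, i.e.\ the geometric side of a dual Petersson formula. Your talk of the Mellin contour for $J_{k-1}$ producing ``$L$-series-type'' objects that ``close rather than proliferate'' gestures at this but does not supply it; in the paper that contour representation appears only in the preliminaries to translate Voronoi into the functional equation, and it plays no role in the main proof.

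Second, undoing Petersson at the end only recovers $\sum_{(j,D)=1}\overline{\chi(j)}a_j(f)(\cdots)$, not the full $\sum_{j\ge 1}a_j(f_D)(\cdots)$ demanded by \eqref{eq:mmm}. The missing terms with $(j,D)>1$ must be shown to contribute zero; in the paper this is a standalone lemma, proved by a separate Poisson/Gauss-sum argument exploiting the primitivity of $\chi$ modulo the squarefree $D$ (the relevant Gauss sum of $\overline\chi$ to a higher prime-power modulus vanishes). Flagging that ``the $D$-part will require care'' acknowledges the issue, but the resolution is a vanishing statement, not a matching one, and you give no route to it. With those two gaps filled your outline coincides with the paper's argument.
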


 Using Hecke theory one gets, \begin{cor}\label{corr}
 For a modular form $f\in B_k(D,\chi),$ 
$$ L(f,s)=\frac{i^k\gamma(f,1-s)L(f,1-s)}{\gamma(f,s)},$$ or $$\Lambda(f,s)=\Lambda(f,1-s).$$
 \end{cor}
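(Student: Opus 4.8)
The plan is to start from Petersson's formula \eqref{eq:pet} applied to the pair of indices $(n,l)$ with $(l,D)=1$, multiply by $g(n)$, and sum over $n\geq 1$. On the spectral side this produces exactly the left-hand side of \eqref{eq:mmm} (up to the diagonal term $\delta_{n,l}$, which contributes $g(l)$ and will have to be bookkept). So the real content is to manipulate the geometric side
\[
2\pi i^{-k}\sum_{n\geq 1}g(n)\sum_{c\equiv 0(D)}\frac{S_\chi(n,l,c)}{c}J_{k-1}\!\left(\frac{4\pi\sqrt{nl}}{c}\right)
\]
and show it equals the geometric side of Petersson's formula applied to the dual sum --- i.e. that after re-expanding the Kloosterman sums via the additive characters, the $n$-sum can be Poisson/Mellin-transformed into a new sum over integers $m$, the $c$-sum inverted using the elementary reciprocity $\overline{A}/B+\overline{B}/A\equiv 1/(AB)$ advertised in the introduction, and the resulting object recognized once more as a sum of Kloosterman sums weighted by a $J$-Bessel transform of $g$. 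At that point one recognizes the right-hand side of \eqref{eq:mmm} as what Petersson's formula produces for the coefficients $a_n(f_D)$, using the definition of $\eta(f)$, the identity $|a_D(f)|=1$, and the twisting relation $a_n(f_D)=\overline{\chi(n)}a_n(f)$ for $(n,D)=1$.

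Concretely, I would carry out the steps in this order. \textbf{Step 1:} Write $S_\chi(n,l,c)=\sum_{x(c)^*}\overline{\chi(x)}e((nx+l\overline{x})/c)$, interchange the $n$-sum with the $c$- and $x$-sums (legitimate because $g$ is compactly supported and smooth, and $J_{k-1}(y)\ll y^{k-1}$ near $0$), so that the inner object becomes $\sum_{n\geq 1}g(n)e(nx/c)J_{k-1}(4\pi\sqrt{nl}/c)$. \textbf{Step 2:} Apply the Mellin–Barnes representation of $J_{k-1}$ quoted after Theorem~\ref{mic}, turning this into a contour integral of $\sum_n g(n)e(nx/c)n^{-s/2}$, which by Mellin inversion on $g$ is a double integral involving $G(s')$ and the Estermann/Lerch-type zeta function $\sum_n e(nx/c)n^{-w}$; the functional equation of that Dirichlet series (itself elementary, essentially finite Gauss sums plus the $\Gamma$-functional equation) introduces the dual variable and the modulus $c$ in the denominator flips. \textbf{Step 3:} At this stage the sum over residues $x(c)^*$ collapses against the complementary residue, and one is left with a sum over a new integer $m$ and a Kloosterman sum $S_{\overline\chi}(\pm m\overline{D},l,c)$ or similar, with the argument of the $J$-Bessel now $4\pi\sqrt{ml}/(\sqrt D c)$ --- this is where reciprocity $\overline{c}/D+\overline{D}/c\equiv 1/(cD)$ is used to disentangle the $\overline{aD}$ appearing in Voronoi. \textbf{Step 4:} Recognize the resulting expression as $\sum_f \overline{a_l(f)}\cdot[\text{Petersson geometric side for }a_m(f_D)]$, undo Petersson to pass back to the spectral side, and match constants (the $2\pi i^k\eta(f)/\sqrt D$, the $\chi(-c)$ with $c=1$, and the diagonal terms on both sides cancel consistently).

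The main obstacle I expect is Step 2–3: making the interchange of the (now three) sums and two contour integrals rigorous while the Dirichlet series $\sum_n e(nx/c)n^{-w}$ is only conditionally convergent, and tracking \emph{all} the arithmetic factors --- the Gauss sum $\tau(\chi)$, the character values $\chi(-c)$, $\overline{\chi(x)}$, $\overline{\chi(m)}$, and the powers of $i$ and $2\pi$ --- so that they assemble into precisely the factor $\eta(f)=\tau(\chi)/(a_D(f)\sqrt D)$ and the shape of $a_n(f_D)$ in Theorem~\ref{mic}. The hypothesis $|x^jg^{(j)}(x)|\ll 1+|\log x|$ is exactly what is needed to control the tails and justify the shifts, so I would use it at each interchange. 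Once \eqref{eq:mmm} is established, the Corollary follows by applying the Hecke relation to single out one eigenform $f$ from the basis $B_k(D,\chi)$ (using the isolation trick attributed to Rogawski in Section~\ref{rogawski}), so that \eqref{eq:mmm} becomes the identity $\sum_n a_n(f)g(n)=\frac{2\pi i^k\eta(f)}{\sqrt D}\sum_n a_n(f_D)\int_0^\infty g(x)J_{k-1}(4\pi\sqrt{nx}/\sqrt D)\,dx$, i.e. Voronoi at $a=c=1$; Mellin-inverting in $g$ and using the Mellin–Barnes formula for $J_{k-1}$ together with the Gamma duplication formula then yields $\Lambda(f,s)=\Lambda(f,1-s)$.
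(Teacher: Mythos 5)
Your final paragraph --- isolating a single eigenform $f^\circ$ from the finite-dimensional basis $B_k(D,\chi)$ by means of the Hecke multiplicativity relations (the ``Rogawski trick''), then Mellin-inverting the resulting Voronoi identity at $a=c=1$ with the Mellin--Barnes representation of $J_{k-1}$ and the Gamma duplication formula to get $\Lambda(f,s)=\Lambda(f,1-s)$ --- is essentially the paper's own proof of Corollary~\ref{corr} in Section~\ref{rogawski}. The only cosmetic difference is the order of the two operations: the paper first Mellin-inverts identity~\eqref{eq:mmm} for every $l$ and argues, by completeness of the family of test functions $g$, that $\sum_{f}\overline{a_l(f)}\bigl(L(f,s)-i^{k}\gamma(f,1-s)L(f,1-s)/\gamma(f,s)\bigr)=0$, and only then isolates $f^\circ$ by building the appropriate Hecke polynomial; you isolate first and Mellin-invert second. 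Since isolation involves only finite linear combinations over $f$, these commute, so this part of your proposal matches the paper.

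The bulk of your proposal, however, is a sketch of a proof of Theorem~\ref{mtheo} itself, and there your route genuinely diverges. You want to replace the $n$-sum by a contour integral via the Mellin--Barnes formula for $J_{k-1}$ and then invoke the functional equation of an Estermann/Lerch-type series $\sum_n e(nx/c)n^{-w}$ to flip the modulus. The paper instead applies Poisson summation to the $n$-sum directly (obtaining~\eqref{eq:forget}), then uses the elementary reciprocity $\overline{A}/B+\overline{B}/A\equiv 1/(AB)\pmod 1$ together with the Gradshteyn--Ryzhik identity $\int_0^\infty e^{-\alpha x}J_\nu(2\beta\sqrt{x})J_\nu(2\gamma\sqrt{x})\,dx=\alpha^{-1}I_\nu(2\beta\gamma/\alpha)e^{-(\beta^2+\gamma^2)/\alpha}$ to swap the roles of the $c$- and $m$-sums, and then runs Poisson summation a \emph{second} time (in a new variable) to reconstruct a Kloosterman--Bessel sum that Petersson's formula can recognize. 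Both routes ultimately encode ``Poisson plus reciprocity,'' but the paper's way is tailored so that the dual Bessel argument $4\pi\sqrt{jx}/\sqrt{D}$ appears in a single term, whereas the Estermann/Hurwitz functional equation generates a $\cos$/$\sin$ (equivalently two-Gamma) pair whose recombination, together with the contour shifts across conditionally convergent series you flag yourself, is the part you have not carried out. More consequentially, you omit a step the paper needs to close the argument: after recognizing the double Poisson/Petersson output as $\sum_f\overline{a_l(f)}\sum_{(j,D)=1}\overline{\chi(j)}a_j(f)(\ldots)$, one still has to show that the terms $a_j(f_D)$ with $(j,D)>1$, which Theorem~\ref{mic} requires on the dual side, contribute zero. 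The paper proves this as a separate lemma using primitivity of $\chi$ and a careful residue analysis modulo powers of $D$; without it the right-hand side of~\eqref{eq:mmm} would not literally be the Voronoi dual sum, and your final Mellin inversion would not produce the genuine $\Lambda(f,1-s)$.
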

 
 \begin{proof}\{Theorem \ref{mtheo}\}
 
 Using Petersson's trace formula on the left hand side of \eqref{eq:mmm} one gets \begin{equation}
\label{eq:aft} \sum_{n}g(n)\left[\delta_{n,l}+ 2\pi i^{-k}\sum_{c=1}^\infty  \frac{S_{\chi}(n,l,Dc)}{Dc}J_{k-1}(\frac{4\pi\sqrt{nl}}{Dc})\right]=\end{equation} $$g(l)+2\pi i^{-k}\sum_{c=1}^\infty \sum_n g(n) \frac{S_{\chi}(n,l,Dc)}{Dc}J_{k-1}(\frac{4\pi\sqrt{nl}}{Dc}).$$

We can interchange the $c$-sum and $n$-sum as the latter is compactly supported. 

For now on we will ignore the term $g(l),$ and come back to it later. Opening up the Kloosterman sum and gathering the $n$-sum together, we apply Poisson summation on it in arithmetic progressions modulo $c$ getting  $$2\pi i^{-k}\sum_{c=1}^\infty\frac{1}{(Dc)^2} \sum_{x(Dc)^{*}}\chi(x)e(\frac{\overline{x}l}{Dc}) \sum_{m \in \Z} \sum_{k(Dc)}e(\frac{xk+mk}{Dc}) \int_{-\infty}^{\infty} g(t) J_{k-1}(\frac{4\pi \sqrt{t l}}{Dc}) e(\frac{-mt}{Dc})dt.$$ 
 
 Using \begin{equation}\label{eq:expp} \sum_{a(c)} e(\frac{ax}{c})= \left\{ \begin{array}{ll}
       c & \text{if } x\equiv 0(c) \\
        0 & \text{ else} \end{array} \right.,
 \end{equation}
 one gets
 \begin{equation}\label{eq:forget} 2\pi i^{-k}\sum_{c=1}^\infty\frac{1}{Dc} \sum_{\substack{m\neq 0 \in \Z \\ (m,Dc)=1}} \overline{\chi(m)}e(\frac{-l\overline{m}}{Dc}) \int_{-\infty}^{\infty} g(t) J_{k-1}(\frac{4\pi \sqrt{t l}}{Dc}) e(\frac{-mt}{Dc})dt.\end{equation} Note the $m=0$ disappears. 

 Now the interesting part of the argument is that the $c$-sum and $n$-sum swap roles, in that the $c$-sum will become part of the averaging coming from the L-function. 
 
We use the elementary reciprocity $$\frac{\overline{A}}{B}+ \frac{\overline{B}}{A}\equiv \frac{1}{AB}(1),$$ to get \begin{equation}\label{eq:afel}
2\pi i^{-k} \sum_{c=1}^\infty\frac{1}{Dc} \sum_{\substack{m\neq 0 \in \Z \\ (m,Dc)=1}} \overline{\chi(m)}e(\frac{l\overline{c}}{Dm})e(\frac{-l}{mDc}) \int_{-\infty}^{\infty} g(t) J_{k-1}(\frac{4\pi \sqrt{t l}}{Dc}) e(\frac{-mt}{Dc})dt.
\end{equation}

Moreso, the terms $m<0$ we write as $-m, m \in \N,$ and exchange sign to the $c$-sum. This can be clearly done everywhere except for the J-Bessel function and $\frac{1}{c}$ term. Using the fact that $J_{k-1}(-x)=-J_{k-1}(x),$  we can rewrite \eqref{eq:afel} as \begin{equation}\label{eq:negm}
2\pi i^{-k} \sum_{c \neq 0, c \in \Z }\frac{1}{Dc} \sum_{\substack{m=1 \\ (m,c)=1}} \overline{\chi(m)}e(\frac{l\overline{c}}{Dm})e(\frac{-l}{mDc}) \int_{-\infty}^{\infty} g(t) J_{k-1}(\frac{4\pi \sqrt{t l}}{Dc}) e(\frac{-mt}{Dc})dt.
\end{equation}
The rearrangement of the $m$-sum is accomplished by using a standard integration by parts argument in the $t$-integral and the estimate in the appendix of \cite{KMV}, $$ |z^k J_{\nu}(z)| \ll_{k,\nu} \frac{1}{(1+z)^{1/2}},$$ for $\Re(\nu) \geq 0.$ 

We also interchange the $c$-sum and $m$-sum. To justify the rearrangement, note for $c$ large, and using the power series expansion, we have the estimate $J_{k-1}(\frac{4\pi \sqrt{tl}}{Dc}) \ll \frac{1}{c^{k-1}}.$ Therefore for $N$ sufficiently large, estimating the exponentials and integral trivially  and noting $k \geq 2,$ we get
\begin{multline}
\sum_{c >N}\frac{1}{Dc} \sum_{\substack{m=1 \\ (m,c)=1}} \overline{\chi(m)}e(\frac{l\overline{c}}{Dm})e(\frac{-l}{mDc}) \int_{-\infty}^{\infty} g(t) J_{k-1}(\frac{4\pi \sqrt{t l}}{Dc}) e(\frac{-mt}{Dc})dt\ll    L(0,\overline{\chi}) \sum_{c >N}\frac{1}{c^k}  < \infty,
\end{multline}  $k \geq 2.$ Clearly, the $c$-sum up to $N$ is finite and is not a problem, and the sums can be interchanged.

Now we need a integral representation from \cite{GR}(6.615)

\begin{equation}\label{eq:gri}
\int_0^\infty \exp(-\alpha x) J_{\nu}(2\beta\sqrt{x})J_{\nu}(2\gamma\sqrt{x})dx=\frac{1}{\alpha}I_{\nu}(\frac{2\beta \gamma}{\alpha})\exp(\frac{-(\beta^2 + \gamma^2)}{\alpha})dx,
\end{equation}

 for $\Re(\nu) > -1.$
 
 We rewrite \eqref{eq:negm} as $$(2\pi i) (2\pi i^{-k}) \sum_{m} \frac{1}{m} \sum_{\substack{c\neq 0 , c \in Z \\ (c,m)=1}}  \overline{\chi(m)}e(\frac{l\overline{c}}{Dm}) \int_{-\infty}^{\infty} g(t) \left[\frac{m}{2\pi i Dc} J_{k-1}(\frac{4\pi \sqrt{t l}}{Dc}) e(\frac{-l}{mDc}) e(\frac{-mt}{Dc})\right]dt.$$
 
 Note the term in brackets is equal to the right hand side of \eqref{eq:gri} times $i^{k-1}$  for $\alpha=\frac{2\pi i Dc}{m}, \beta=\frac{2\pi \sqrt{l}}{m},$ and $\gamma=2\pi \sqrt{t}$ using the fact that for $k-1$ odd, $J_{k-1}(z)=i^{k-1} I_{k-1}(-iy).$

Using this integral representation, one has \begin{equation}\label{eq:afint} 4\pi^2  \sum_{m} \frac{1}{m} \sum_{\substack{c\neq 0 , c \in Z \\ (c,m)=1}}    \overline{\chi(m)}e(\frac{l\overline{c}}{Dm}) \int_{-\infty}^{\infty} g(t) \left[\int_0^\infty J_{k-1}(\frac{4\pi\sqrt{ly}}{m})J_{k-1}(4\pi\sqrt{ty})e(\frac{-Dcy}{m})dy\right]dt.
\end{equation}

We make a change of variables $y \to \frac{y}{D}$ to get \begin{equation}\label{eq:afintc} 4\pi^2  \sum_{m} \frac{1}{Dm} \sum_{\substack{c\neq 0 , c \in Z \\ (c,m)=1}}    \overline{\chi(m)}e(\frac{l\overline{c}}{Dm}) \int_{-\infty}^{\infty} g(t) \left[\int_0^\infty J_{k-1}(\frac{4\pi\sqrt{lDy}}{Dm})J_{k-1}(\frac{4\pi\sqrt{ty}}{\sqrt{D}})e(\frac{-cy}{m})dy\right]dt.
\end{equation}

Using $\frac{\tau(\chi)\tau(\overline{\chi})}{D}=1,$ we get \begin{equation}\label{eq:b4c} \frac{4\pi^2\tau(\chi)}{D} \sum_{m=1} \frac{1}{Dm} \sum_{c \in \Z }   \overline{\chi(m)}\tau(\overline{\chi})e(\frac{l\overline{c}}{Dm}) \int_{-\infty}^{\infty}J_{k-1}(\frac{4\pi\sqrt{lDy}}{Dm})\times \end{equation} $$ \left[\int_0^\infty g(t) J_{k-1}(\frac{4\pi\sqrt{ty}}{\sqrt{D}})dt\right] e(\frac{-cy}{m})dy.$$

 Anticipating using the Chinese remainer theorem we let $c'=Dc.$ So \eqref{eq:b4c} equals \begin{equation}\eqref{eq:b4c} \frac{4\pi^2\tau(\chi)}{D} \sum_{m=1} \frac{1}{Dm} \sum_{\substack{c'\in \Z, c'\equiv 0(D)\\ c'\neq 0, (c',m)=1} }   \overline{\chi(m)}\tau(\overline{\chi})e(\frac{l\overline{\frac{c'}{D}}}{Dm}) \int_{-\infty}^{\infty}J_{k-1}(\frac{4\pi\sqrt{lDy}}{Dm})\times \end{equation} $$ \left[\int_0^\infty g(t) J_{k-1}(\frac{4\pi\sqrt{ty}}{\sqrt{D}})dt\right] e(\frac{-c'y}{Dm})dy.$$
 
 We focus on the arithmetic inside the $c'$-sum.  We note using $(m,D)=1$ and the Chinese remainder theorem that \begin{multline}
 \overline{\chi(m)}\tau(\overline{\chi})e(\frac{l\overline{\frac{c'}{D}}}{Dm})= [\sum_{a(D)}\overline{\chi}(a)e(\frac{\overline
 {m}a}{D})][\sum_{\substack{b(m)^{*}\\\overline{b}l\equiv \frac{c'}{D}(m)}} e(\frac{b}{m})]= \sum_
 {\substack{x(Dm)\\ D\overline{x}l\equiv c'(Dm)}}\overline{\chi(x)}e(\frac{x}{Dm}).\end{multline}
Using \eqref{eq:expp} again the last line equals $$ \sum_
 {\substack{x(Dm)\\ D\overline{x}l\equiv c'(Dm)}}\overline{\chi(x)}e(\frac{x}{Dm})=\frac{1}{Dm} \sum_{x(Dm)}\overline{\chi(x)} e(\frac{x}{Dm}) \sum_{k(Dm)} e(\frac{k(Dl\overline{x}-c')}{Dm}).$$

Incorporating the above line and a rearrangement of the exponential sums, we have
\begin{equation}\label{eq:reg}
\frac{4\pi^2\tau(\chi)}{D} \sum_{m=1} \frac{1}{(Dm)^2}\sum_{c' \in \Z} \sum_{x(Dm)}\overline{\chi(x)} e(\frac{x}{Dm}) \sum_{k(Dm)} e(\frac{kDl\overline{x}}{Dm})  \int_{0}^{\infty}J_{k-1}(\frac{4\pi\sqrt{lDy}}{Dm})\times \end{equation} $$ \left[\int_0^\infty g(t) J_{k-1}(\frac{4\pi\sqrt{ty}}{\sqrt{D}})dt\right] e(\frac{-c'(y+k)}{Dm})dy. $$ We note the $c'$-sum has the restriction $c'\equiv 0(D)$ removed by the $k$-sum.
With a change of variables $y \to y - k,$ followed by $y \to Dmy,$ we get  
\begin{multline}\label{eq:reg1}
\frac{4\pi^2\tau(\chi)}{D}  \sum_{m=1} \frac{1}{Dm} \sum_{x(Dm)^{*}}  \overline{\chi(x)}e(\frac{x}{Dm})\sum_{k(Dm)}e(\frac{kDl\overline{x}}{Dm}) \sum_{c'\in \Z }  \int_{0}^{\infty}J_{k-1}(\frac{4\pi\sqrt{lD(Dmy-k)}}{Dm})\times  \\ \left[\int_0^\infty g(t) J_{k-1}(\frac{4\pi\sqrt{t(Dmy-k)}}{\sqrt{D}})dt\right] e(-c'y)dy.
\end{multline}

The $c'$-sum now clearly came from a Poisson summation, namely,\begin{multline}\label{eq:popo}
\sum_{c'\in \Z } \int_{0}^{\infty}J_{k-1}(\frac{4\pi\sqrt{lD(Dmy-k)}}{Dm})\times  \\ \left[\int_0^\infty g(t) J_{k-1}(\frac{4\pi\sqrt{t(Dmy-k)}}{\sqrt{D}})dt\right] e(-c'y)dy=\\ \sum_{c\in \Z} J_{k-1}(\frac{4\pi\sqrt{lD(Dmc'-k)}}{Dm})\int_0^\infty g(t) J_{k-1}(\frac{4\pi\sqrt{t(Dmc-k)}}{\sqrt{D}})dt.
\end{multline}

In order to check that  $$F(w)=J_{k-1}(\frac{4\pi\sqrt{lD(Dmw-k)}}{Dm})\int_0^\infty g(t) J_{k-1}(\frac{4\pi\sqrt{t(Dmw-k)}}{D})dt$$ satisfies the conditions for Poisson summation, we use the following lemma of \cite{KMV},
\begin{lemma}\label{mmm}
Let $h(x)$ be a smooth function supported in $[M,2M]$ which satisfies $|x^j h^{(j)}(x)| \ll (1+|\log x|)$
for all $i\geq 0, x >0.$ For $\nu$ complex and $j \geq 0$ we have $$\int_0^\infty J_{\nu}(x)h(x)dx \ll _{\nu,j} \frac{(1+|\log M|)}{M^{j-1}} \frac{M^{\Re \nu +j+1}}{(1+M)^{\Re \nu +j+1/2}}.$$

\end{lemma}

We apply this to the integral in $F(w)$ with $$h(t)=\frac{D^2}{16\pi^2(Dmw-k)^2}tg(\frac{D^2t^2}{16\pi^2(Dmw-k)^2}).$$ It is easy, but tedious, to check that the assumptions of the lemma are fulfilled by using the assumption on $g$ (in the hypothesis of Theorem \ref{mtheo}) that $|x^j g^{(j)}(x)| \ll (1+|\log x|).$ The lemma then gives $F(w) \ll \min(w^{k-1}, \frac{1}{w^j})$ for any $j >0$ for $w \in [0,\infty).$ So certainly Poisson summation holds in this case.

 Defining $Dmc-k=-j,$ \eqref{eq:reg1} again by regrouping equals \begin{multline}\label{eq:reg2}
\frac{4\pi^2\tau(\chi)}{D} \sum_{m=1} \frac{1}{Dm} \sum_{j \in \Z } \sum_{x(Dm)^{*}} \overline{\chi(x)}
e(\frac{x}{Dm})e(\frac{jDl\overline{x}}{Dm}) J_{k-1}(\frac{4\pi\sqrt{ljD}}{Dm})\int_0^\infty g(t) J_{k-1}(\frac{4\pi
\sqrt{tj}}{\sqrt{D}})dt=\\  \frac{4\pi^2\tau(\chi)}{D} \sum_{m=1} \frac{1}{Dm} \sum_{j \in \Z}\overline{\chi(j)} S_
{\chi}(Dl,j,Dm)J_{k-1}(\frac{4\pi\sqrt{lDj}}{Dm})\int_0^\infty g(t) J_{k-1}(\frac{4\pi\sqrt{tj}}{\sqrt{D}})dt=\\  \frac{4\pi^2\tau(\chi)}{D} \sum_{j \in \Z } \overline{\chi(j)} \left[ \sum_{m\equiv 0(D)} \frac{S_{\chi}(Dl,j,m)}{m}J_{k-1}(\frac{4\pi\sqrt{Dlj}}{m})\right] \int_0^\infty g(t) J_{k-1}(\frac{4\pi\sqrt{tj}}{\sqrt{D}})dt.\end{multline}

Now recall we ignored $g(l)$ from \eqref{eq:aft}, so \eqref{eq:mmm} equals  \begin{equation}\label{eq:nea}
g(l)+ \frac{2\pi i^{k}\tau(\chi)}{D} \sum_{j }\overline{\chi(j)} \left[ 2\pi i^{-k}\sum_{m\equiv 0(D)}\frac{ S_{\chi}(Dl,j,m)}{m}J_{k-1}(\frac{4\pi\sqrt{Dlj}}{m})\right] \int_0^\infty g(t) J_{k-1}(\frac{4\pi\sqrt{tj}}{\sqrt{D}})dt.
\end{equation}

The $g(l)$ term is again the diagonal term for the geometric side of the trace formula that comes from the term $$\sum_{f_D} a_{lD}(f_D)\overline{a_{lD}(f_D)}.$$  This is again using the fact that $|a_{D}(f_D)|=1.$

Now as $D$ is squarefree and $\chi$ is primitive, the space $B_k(N,\chi)$ is spanned by newforms which implies the Fourier coefficients are multiplicative in all the primes (including the bad primes) and  $|c_D(f)|=1.$
So using Petersson's formula again we get,  \begin{multline} \label{eq:al} \sum_{f \in B_k}\overline{a_{l}(f)}\left[ \frac{2\pi i^k\tau(\chi)}{D a_D(f)}\sum_{j} \overline{\chi(j)} a_j(f) \int_0^\infty g(x)J_{k-1}(\frac{4\pi\sqrt{jx}}{\sqrt{D}})dx\right]=\\
 \sum_{f \in B_k}\overline{a_{l}(f)}\left[ \frac{2\pi i^k\eta(f)}{\sqrt{D} }\sum_{j} \overline{\chi(j)} a_j(f) \int_0^\infty g(x)J_{k-1}(\frac{4\pi\sqrt{jx}}{\sqrt{D}})dx\right]=\\
 \sum_{f_D \in B_k}\overline{a_{l}(f_D)}\left[ \frac{2\pi i^k\eta(f)}{\sqrt{D} }\sum_{(j,D)=1}  a_j(f_D) \int_0^\infty g(x)J_{k-1}(\frac{4\pi\sqrt{jx}}{\sqrt{D}})dx\right].\end{multline}

Note however to show the connection to Voronoi summation from Theorem \ref{mic}, we need also the coefficients $a_j(f_D)$ with $(j,D)>1.$ 
 We prove \begin{lemma}
For $(l,D)=1,$
 \begin{equation}\label{eq:z0} \sum_{f_D \in B_k}\overline{a_{l}(f_D)}\left[ \frac{2\pi i^k\eta(f)}{\sqrt{D} }\sum_{(j,D)>1}  a_j(f_D) \int_0^\infty g(x)J_{k-1}(\frac{4\pi\sqrt{jx}}{\sqrt{D}})dx\right]=0.\end{equation}
\end{lemma}

\begin{proof}
First we write $j=D^kj', (j',D)=1.$ Using the definition of the coefficients $f_D(n)$ in Theorem \ref{mic},  the left hand side of \eqref{eq:z0} equals $$\frac{2\pi i^k}{\sqrt{D} }\sum_{k=1}^\infty \sum_{(j,D)=1} \overline{\chi(j)}  \sum_{f \in B_k}\overline{a_{lD^{k+1}}(f)}a_j(f) \int_0^\infty g(x)J_{k-1}(\frac{4\pi\sqrt{jD^kx}}{\sqrt{D}})dx.$$   Fix a $k,$ and following the same argument as we have previously, we apply Petersson's formula to get $$2\pi i^{-k}  \sum_{j} \overline{\chi(j)} \sum_{c=1}^\infty  \frac{S_{\chi}(j,lD^{k+1},Dc)}{Dc}J_{k-1}(\frac{4\pi\sqrt{nlD^{k+1}}}{Dc})\int_0^\infty g(x)J_{k-1}(\frac{4\pi\sqrt{jD^kx}}{\sqrt{D}})dx.$$ That we can apply Petersson's formula in this case follows from using the estimates of Lemma \ref{mmm}. With a change of variable in the Kloosterman sum this equals $$2\pi i^{-k}  \sum_{j} \sum_{c=1}^\infty  \frac{S_{\chi}(1,jlD^{k+1},Dc)}{Dc}J_{k-1}(\frac{4\pi\sqrt{nlD^{k+1}}}{Dc})\int_0^\infty g(x)J_{k-1}(\frac{4\pi\sqrt{jD^kx}}{\sqrt{D}})dx.$$ Interchanging the $j$- and $c$-sums justified by a similar Bessel function analysis as above, we apply Poisson summation to the $j$-sum modulo $Dc.$ The crucial arithmetic sums, analogous to obtaining \eqref{eq:forget}, are \begin{equation}\label{eq:lase}\sum_{x(Dc)^{*}} \overline{\chi(x)} e(\frac{x}{Dc}) \sum_{a(Dc)} e(\frac{\overline{x}aD^{k+1}l}{Dc})e(\frac{-am}{Dc}),\end{equation} where $m$ is the variable for  Poisson summation. The inner sum is non-zero only when $D^{k+1}l\equiv mx(Dc).$ If $(m,c)=1,$ then it is easy to check \eqref{eq:lase} is zero. As well if $D^h|m$ then $D^{h-1}|c$ as $(xl,D)=1$ for $h\leq k+1.$ So for a non-zero contribution we must have $D^{k+1}|m$ and $D^{k}|c.$ 
Writing $c=D^{k}c'$ and $m=D^{k+1}m',$ $x$ must satisfy  $l\equiv m'x(c').$ We can write these solutions as $x\equiv \overline{m'}l + c' b (Dc),$ where $b(D^{k+1}).$ So \eqref{eq:lase} equals $$Dc \sum_{b(D^{k+1})} \overline{\chi}( \overline{m'}l + c' b) e(\frac{\overline{m'}l + c' b}{Dc})=e(\frac{\overline{m'}l}{Dc})Dc  \sum_{b(D^{k+1})} \overline{\chi}( \overline{m'}l + c' b) e(\frac{ b}{D^{k+1}}) .$$ With a change of variables $b \to \overline{c}b,$ $b \to b-\overline{m'}l,$ the inner Gauss sum is $$\sum_{b(D^{k+1})} \overline{\chi}( b) e(\frac{ \overline{c'}b}{D^{k+1}})=\overline{\chi(c')}\sum_{b(D^{k+1})} \overline{\chi}( b) e(\frac{ b}{D^{k+1}}).$$ This last Gauss sum is zero as $\chi$ is a primitive character modulo $D$ and $k+1\geq 2.$
\end{proof}

\begin{remark}There is nothing special about the test function we used in the Lemma, and by a similar argument it easy to show for a ``nice" test function $V(x)$ and $k\geq 2,$ that $$\sum_{j=1}^\infty \overline{\chi(j)}  V(j) \sum_{f \in B_k}\overline{a_{D^{k}}(f)}a_j(f) =0.$$\end{remark}

 \end{proof}
 
 Applying the above lemma we get \eqref{eq:al} equaling $$ \sum_{f_D \in B_k}\overline{a_{l}(f_D)}\left[ \frac{2\pi i^k\eta(f)}{\sqrt{D} }\sum_{j=1}^\infty  a_j(f_D) \int_0^\infty g(x)J_{k-1}(\frac{4\pi\sqrt{jx}}{\sqrt{D}})dx\right],$$ which proves Theorem \ref{mtheo}.
 
 \section{Application of Hecke theory}\label{rogawski}
 Now to prove Corollary \ref{corr}. One can rewrite Theorem \ref{mtheo} as \begin{equation}\label{eq:comp} \frac{1}{2\pi i} \int_{(\sigma)} G(s) \left[\sum_{f \in B_k}\overline{a_l(f)}\bigg(L(f,s) - \frac{i^k\gamma(f,1-s)L(f,1-s)}{\gamma(f,s)}\bigg)\right] ds,
\end{equation} using that the Voronoi summation we take into consideration is equivalent to the functional equation.
Since \eqref{eq:comp} holds for any $g \in C_0^{\infty}(\R^{+})$ and in fact holds with slightly more care for all Schwarz functions, by completeness, it must hold $$\sum_{f \in B_k}\overline{a_l(f)}\bigg(L(f,s) - \frac{i^k\gamma(f,1-s)L(f,1-s)}{\gamma(f,s)}\bigg)=0.$$

Fix a form $f^{\circ} \in B_k.$ Now as $l$ was arbitrary and the space of forms $f \in B_k$ is finite dimensional, using the relation  $$a_{n}
(f)a_{l}(f)=\sum_{r|(n,l)}\chi(r)a_{\frac{nl}{r^2}}(f)$$ for $(nl,D)=1$ one can build a polynomial in the Hecke coefficients, call it $F(a_{q_1}(f),a_{q_2}(f),...,a_{q_N}(f)),$ such that $$\sum_{f \in B_k}F(a_{q_1}(f),a_{q_2}(f),...,a_{q_N}(f))\bigg(L(f,s) - \frac{i^k\gamma(f,1-s)L(f,1-s)}{\gamma(f,s)}\bigg)=0,$$  where $F$ equals $1$ for $f=f^{\circ},$ and equals $0$ for $f\neq f^{\circ}$ following \cite{H}. So we get a pointwise equality $$L(f,s) - \frac{i^k\gamma(f,1-s)L(f,1-s)}{\gamma(f,s)}=0,$$ which proves the corollary.

    \section{Appendix}
    We replicate Sarnak's argument from his letter to Langlands, \cite{S}. In order to do so, we use the Kuznetsov trace formula for the entire $GL_2$ spectrum. We refer to \cite{H} for the details. Let $H(D,\chi)$ denote the $GL_2$ spectrum with level $D$ and nebentypus $\chi.$
    \begin{theorem} Let $g,V \in C_0^\infty(\R^{+})$ with  $|x^j g^{(j)}(x)| \ll (1+|\log x|),$ $X$ a large fixed real number, $D$ and $\chi$ as above, then for any integer $A>0,$  
   \begin{equation} \label{eq:sarrk1} \sum_{n \leq X} \sum_{f \in H(D,\chi)}h(t_f,V) \overline{a_l(f)} a_n(f)g(n/X)=O(X^{-A})\end{equation}    
    \end{theorem}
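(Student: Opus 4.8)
\emph{Proof proposal.} The plan is to feed the spectral side of \eqref{eq:sarrk1} into the Kuznetsov trace formula for $H(D,\chi)$, peel off the diagonal (which disappears once $X$ is large), and show the surviving geometric term has arbitrary polynomial decay in $X$.

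First I would apply the Kuznetsov formula to $\sum_{f\in H(D,\chi)}h(t_f,V)\overline{a_l(f)}\,a_n(f)$, producing a diagonal contribution $c(V)\,\delta_{l,n}$ with $c(V)$ depending only on $V$, together with the Kloosterman term
\[
\sum_{c\equiv 0\,(D)}\frac{S_{\chi}(l,n,c)}{c}\,\Phi_{V}\!\Big(\frac{4\pi\sqrt{ln}}{c}\Big),
\]
where $\Phi_{V}$ is the Bessel transform of $V$ attached to the full $GL_2$ spectrum; for $V\in C_0^\infty(\R^{+})$ the function $\Phi_{V}$ is smooth, bounded, and decays faster than any power of its argument at infinity (see \cite{I}). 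Summing against $g(n/X)$, the diagonal becomes $c(V)\,g(l/X)$, which vanishes as soon as $X>l/\alpha$, where $[\alpha,\beta]\subset(0,\infty)$ is a fixed interval containing $\mathrm{supp}\,g$ and $l$ is fixed. Hence for large $X$ only the Kloosterman term remains.

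To control it I would Mellin-expand $g(n/X)=\frac{1}{2\pi i}\int_{(\sigma)}G(s)(X/n)^{s}\,ds$ with $\sigma$ large and $G$ decaying faster than any polynomial on vertical lines, interchange the (absolutely convergent, on $\Re s=\sigma$) $n$-sum with the integral so that it collapses onto the Dirichlet series $L(s,f)=\sum_{n\geq1}a_n(f)n^{-s}$, and reassemble, so that the left side of \eqref{eq:sarrk1} becomes
\[
\frac{1}{2\pi i}\int_{(\sigma)}G(s)\,X^{s}\Big[\sum_{f\in H(D,\chi)}h(t_f,V)\,\overline{a_l(f)}\,L(s,f)\Big]\,ds .
\]
Each cuspidal $L(s,f)$ is entire, and by the functional equation, Theorem \ref{mic}, together with the convexity bound it satisfies a uniform estimate $L(s,f)\ll_{A}(1+|s|+|t_f|)^{C(A)}$ throughout $-A\le\Re s\le\sigma$; since the Selberg--Bessel transform $h(t,V)$ of a smooth compactly supported $V$ decays faster than any power of $|t|$, Weyl's law makes the bracketed spectral sum converge locally uniformly, hence holomorphic on the whole strip and polynomially bounded in $|\Im s|$ on each vertical line. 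I would then shift the contour to $\Re s=-A$: the horizontal segments die by the decay of $G$, one has $|X^{s}|=X^{-A}$, and $\int_{(-A)}|G(s)|\,\big|\sum_f h(t_f,V)\overline{a_l(f)}L(s,f)\big|\,|ds|=O_{A}(1)$, which gives \eqref{eq:sarrk1}.

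The main obstacle is twofold. First is the uniformity over the infinite spectrum $H(D,\chi)$: pushing the contour into $\Re s<0$ forces the use of the functional equation and hence growth like a power of $|t_f|$ in $L(s,f)$, tolerable only because $h(t_f,V)$ decays super-polynomially and, by Weyl's law, wins; establishing holomorphy and the polynomial $|\Im s|$-bound of the spectral sum uniformly in the strip is the technical heart. Second is the continuous spectrum, where $L(s,f)$ is a product of Hecke L-functions and can have a pole on $\Re s=1$, so one must show that the residual terms swept out as this parameter varies do not contribute to \eqref{eq:sarrk1} after integration over the continuous parameter --- the delicate point handled by the methods of \cite{FLN}, consistent with the standard representation, not being spectrally isolated, carrying no residue. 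An alternative staying entirely on the geometric side is to open the Kloosterman sum, apply Poisson summation in $n$ modulo $c$, and estimate the resulting transforms of $t\mapsto g(t/X)\,\Phi_{V}\!\big(4\pi\sqrt{lt}/c\big)$ by repeated integration by parts: the weight has frequency $\ll 1/X$ while the surviving dual frequencies are $\gg 1$, so non-stationary phase yields an arbitrary saving in $X$; this is the route of \cite{I}.
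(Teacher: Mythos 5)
Your primary route (Mellin inversion, then a contour shift powered by the functional equation of each $L(s,f)$) is not what the paper does, and it is methodologically at odds with the paper's program even though, in isolation, it would prove the appendix statement. The appendix is a reproduction of Sarnak's argument, whose entire purpose is to establish the decay \eqref{eq:sarrk1} \emph{directly from the geometric side of the Kuznetsov formula, with no L-function input at all}: one opens the Kloosterman sums, applies Poisson summation in $n$ modulo $Dc$ to arrive at the analogue of \eqref{eq:forget}, observes that the compact supports of $g$ and $V$ localize $c\sim\sqrt{X}$, and then a change of variables $t\mapsto Dct$ followed by $j$-fold integration by parts yields a bound $\ll (\sqrt{X})^{-(j-2)}m^{-j}$ for each term; summing over $m$ and $c$ gives $\ll (\sqrt{X})^{-(j-3)}$, from which the theorem follows by taking $j$ large. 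This is elementary, self-contained, and sidesteps entirely the continuous spectrum (where your approach must confront poles of Eisenstein L-functions on $\Re s = 1$, and your appeal to the methods of \cite{FLN} is misdirected, as those methods concern excising the trivial representation from the Arthur--Selberg trace formula, not Eisenstein residues in the Kuznetsov setting). Since the main theorem of the paper is precisely to \emph{recover} the functional equation from the trace formula, invoking it to prove \eqref{eq:sarrk1} is against the spirit, even if not logically circular for this standalone statement. You do correctly identify the paper's actual route as your ``alternative'' in the last sentence of your write-up, though the frequency bookkeeping there is a little off: after Poisson summation the dual frequencies are $m/(Dc)\gtrsim 1/\sqrt{X}$ (not $\gg 1$), and it is the change of variables $t\mapsto Dct$, together with the fact that the weight's derivatives lose a factor $X^{-1/2}$ per differentiation on the relevant range $c\sim\sqrt{X}$, that produces the power saving. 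That geometric-side argument is the intended proof and should be promoted from a footnote to the main line.
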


    \begin{proof}
    We apply the Kuznetsov trace formula and Poisson summation again similar to getting \eqref{eq:forget} to get, 
     \begin{equation}\label{eq:forget1} \sum_{c=1}^\infty\frac{1}{Dc} \sum_{\substack{m\neq 0 \in \Z \\ (m,Dc)=1}} \overline{\chi(m)}e(\frac{-l\overline{m}}{Dc}) \int_{-\infty}^{\infty} g(\frac{t}{X}) V(\frac{4\pi \sqrt{t l}}{Dc}) e(\frac{-mt}{Dc})dt.\end{equation} Essentially, the argument only depends on showing the integral is bounded by $O(X^{-A}).$ Note as $V$ and $g$ are compactly supported, the $c$-sum is restricted to size $a\sqrt{X} \leq c \leq b\sqrt{X},$ for some absolute constants $a,b \in \R^{+},$ notated $c \sim \sqrt{X}.$ Note that $g^{(k)}(\frac{Dct}{X})\ll \frac{1}{X^{k/2}}$ and $V^{(h)}(\frac{4\pi \sqrt{t l}}{\sqrt{Dc}})\ll  \frac{1}{X^{h/2}}$ for $h,k \geq 0.$ Also the size of the integral is $\frac{X}{c} \sim \sqrt{X}.$ Using these estimates and integrating by parts $j$-times, after a change of variables $t \to Dct,$ it easy to check  \begin{equation}
Dc \int_{-\infty}^{\infty} g(\frac{Dct}{X}) V(\frac{4\pi \sqrt{t l}}{\sqrt{Dc}}) e(-mt)dt \ll \frac{Dc}{(\sqrt{X})^{j-1}m^j}\ll \frac{1}{(\sqrt{X})^{j-2}m^j}.
\end{equation}
So including the $c$- and $m$-sums we have $$\ll \frac{1}{(\sqrt{X})^{j-2}} \sum_{c \sim \sqrt{X}} \sum_{m} \frac{1}{m^j} \ll \frac{1}{(\sqrt{X})^{j-3}}.$$ Obviously, this implies the theorem by taking $\frac{j-3}{2} >A.$

    
    \end{proof}

\end{document}